\newtheorem{theorem}{Theorem}
\newtheorem{lemma}{Lemma}
\begin{document}
\author{G. Tutberidze}
\title[partial sums]{On the strong convergence of partial sums with respect to bounded Vilenkin systems}
\address{G.Tutberidze, The University of Georgia, School of Science and Technology, 77a Merab Kostava St, Tbilisi 0128, Georgia and Department of Engineering Sciences and Mathematics, Lule\aa\ University of Technology, SE-971 87 Lule\aa, Sweden and UiT The Arctic University of Norway, P.O. Box 385, N-8505, Narvik, Norway.}
\email{giorgi.tutberidze1991@gmail.com}
\thanks{The research was supported by a Swedish Institute scholarship and by
Shota Rustaveli National Science Foundation grant YS15-2.1.1-47.}
\date{}
\maketitle

\begin{abstract}
In this paper we investigate some strong convergence theorems for partial
sums with respect to Vilenkin system.
\end{abstract}

\date{}

\textbf{2010 Mathematics Subject Classification.} 42C10.

\noindent \textbf{Key words and phrases:} Vilenkin system, partial sums, Fejé%
r means, martingale Hardy space, strong convergence.

\section{Introduction}

It is well-known (for details see e.g. \cite{gol} and \cite{sws}) that
Vilenkin system does not form basis in the space $L_{1}\left( G_{m}\right).$
Moreover, there is a function in the Hardy space $H_{1}\left( G_{m}\right),$
such that the partial sums of $f$ \ are not bounded in $L_{1}$-norm.
However, subsequence $S_{M_{n}}$ of partial sums are bounded from the
martingale Hardy space $H_{1}\left( G_{m}\right) $ to the Lebesgue space $L_{1}\left( G_{m}\right):$
\begin{equation}  \label{1ccss}
\left\Vert S_{M_k}f\right\Vert _{H_1}\leq c\left\Vert f\right\Vert
_{H_{1}} \text{ \ \ \ } (k\in \mathbb{N}).
\end{equation}

Moreover, we have the follwing norm equivalence:
\begin{equation} \label{equi}
\left\Vert f\right\Vert_{H_{1}}\equiv \left\Vert \sup_n\left\vert S_{M_n}f\right\vert\right\Vert_{1}.
\end{equation}

Moreover, Gát \cite{gat1} proved the following strong convergence result for
all $f\in H_{1}:$%
\begin{equation*}
\underset{n\rightarrow \infty }{\lim }\frac{1}{\log n}\overset{n}{\underset{%
k=1}{\sum }}\frac{\left\Vert S_{k}f-f\right\Vert _{1}}{k}=0,
\end{equation*}%
where $S_{k}f$ denotes the $k$-th partial sum of the Vilenkin-Fourier series
of $f.$

It follows that there exists an absolute constant $c,$ such that%
\begin{equation} \label{si}
\frac{1}{\log n}\overset{n}{\underset{k=1}{\sum }}\frac{\left\Vert
S_{k}f\right\Vert _{1}}{k}\leq c\left\Vert f\right\Vert _{H_{1}} \text{ \ }%
\left( n=2,3...\right)
\end{equation}%
and 
\begin{equation*}
\underset{n\rightarrow \infty }{\lim }\frac{1}{\log n}\overset{n}{\underset{%
k=1}{\sum }}\frac{\left\Vert S_{k}f\right\Vert _{1}}{k}=\left\Vert
f\right\Vert _{H_{1}},
\end{equation*}%
for all $f\in H_{1}.$

Analogical result for the trigonometric system was proved by Smith \cite{sm}, for the Walsh-Paley system by Simon \cite{Si3}.

If partial sums of Vilenkin-Fourier series was bounded from $H_{1}$ to $L_{1} $ we also would have:
\begin{equation} \label{tut}
\underset{n\in \mathbf{\mathbb{N}}_{+}}{\sup }\frac{1}{n}\underset{m=1}{%
\overset{n}{\sum }}\left\Vert S_{m}f\right\Vert _{1}\leq c\left\Vert f\right\Vert_{H_1}.
\end{equation}
but as it was present above that boundednes of partial sums does not hold from  $H_{1}$ to $L_{1} $, However, we have inequality (\ref{si}).

On the other hand, in one-dimensional, Fujji \cite{Fu} and Simon \cite{Si2}
proved that maximal operator Fejér means is bounded from $H_{1}$ to $L_{1}$. It follows that
\begin{equation} \label{fusi}
\underset{n\in \mathbf{\mathbb{N}}_{+}}{\sup }\left\Vert \frac{1}{n}\underset{m=1}{%
\overset{n}{\sum }} S_{m}f\right\Vert _{1}<c\left\Vert f \right\Vert_{H_1}.
\end{equation}

So, natural question has arised that if inequality (\ref{tut}) holds true, which would be generalization of inequality (\ref{fusi}) or we have negative answer on this problem.

In this paper we prove that there exists a function $f\in H_{1} $ such that 
\begin{equation*}
\underset{n\in \mathbf{\mathbb{N}}_{+}}{\sup }\frac{1}{n}\underset{m=1}{%
\overset{n}{\sum }}\left\Vert S_{m}f\right\Vert _{1}=\infty.
\end{equation*}

This paper is organized as follows: in order not to disturb our discussions
later on some definitions and notations are presented in Section 2.  For the proofs of the main results we
need some auxiliary Lemmas. These results are presented in Section 3. The formulation and detailed proof of main results can be found in Section 4.

\section{Definitions and Notations}

Let $\mathbb{N}_{+}$ denote the set of the positive integers, $\mathbb{N}:=%
\mathbb{N}_{+}\cup \{0\}.$

Let $m:=(m_{0,}m_{1},\dots)$ denote a sequence of the positive integers not
less than 2.

Denote by 
\begin{equation*}
Z_{m_{k}}:=\{0,1,\dots,m_{k}-1\}
\end{equation*}
the additive group of integers modulo $m_{k}.$

Define the group $G_{m}$ as the complete direct product of the group $%
Z_{m_{j}}$ with the product of the discrete topologies of $Z_{m_{j}}$ $^{,}$%
s.

The direct product $\mu $ of the measures 
\begin{equation*}
\mu _{k}\left( \{j\}\right):=1/m_{k}\text{ \qquad }(j\in Z_{m_{k}})
\end{equation*}
is the Haar measure on $G_{m_{\text{ }}}$with $\mu \left( G_{m}\right) =1.$

If $\sup_{n\in \mathbb{N}}m_{n}<\infty $, then we call $G_{m}$ a bounded
Vilenkin group. If the generating sequence $m$ is not bounded then $G_{m}$
is said to be an unbounded Vilenkin group. \textbf{In this paper we discuss
bounded Vilenkin groups only.}

The elements of $G_{m}$ are represented by sequences 
\begin{equation*}
x:=(x_{0},x_{1},\dots,x_{k},\dots)\qquad \left( \text{ }x_{k}\in
Z_{m_{k}}\right).
\end{equation*}

It is easy to give a base for the neighbourhood of $G_{m}$ 
\begin{equation*}
I_{0}\left( x\right):=G_{m},
\end{equation*}%
\begin{equation*}
I_{n}(x):=\{y\in G_{m}\mid y_{0}=x_{0},\dots,y_{n-1}=x_{n-1}\}\text{ }(x\in
G_{m},\text{ }n\in \mathbb{N})
\end{equation*}%
Denote $I_{n}:=I_{n}\left( 0\right) $ for $n\in \mathbb{N}$ and $\overline{%
I_{n}}:=G_{m}$ $\backslash $ $I_{n}$ $.$

Let

\begin{equation*}
e_{n}:=\left( 0,\dots,0,x_{n}=1,0,\dots\right) \in G_{m}\qquad \left( n\in 
\mathbb{N}\right).
\end{equation*}

If we define the so-called generalized number system based on $m$ in the
following way: 
\begin{equation*}
M_{0}:=1,\text{ \qquad }M_{k+1}:=m_{k}M_{k\text{ }}\ \qquad (k\in \mathbb{N})
\end{equation*}%
then every $n\in \mathbb{N}$ can be uniquely expressed as $%
n=\sum_{k=0}^{\infty }n_{j}M_{j}$ where $n_{j}\in Z_{m_{j}}$ $~(j\in \mathbb{%
N})$ and only a finite number of $n_{j}`$s differ from zero. Let $\left\vert
n\right\vert :=\max $ $\{j\in \mathbb{N};$ $n_{j}\neq 0\}.$

For the natural number $n=\sum_{j=1}^{\infty }n_{j}M_{j},$ we define%
\begin{equation*}
\delta _{j}=signn_{j}=sign\left( \ominus n_{j}\right) ,\text{ \ \ \ \ }%
\delta _{j}^{\ast }=\left\vert \ominus n_{j}-1\right\vert \delta _{j},
\end{equation*}%
where $\ominus $ is the inverse operation for $a_{k}\oplus b_{k}=\left(
a_{k}+b_{k}\right) $mod$m_{k}.$

We define functions $v$ and $v^{\ast }$ by 
\begin{equation*}
v\left( n\right) =\sum_{j=0}^{\infty }\left\vert \delta _{j+1}-\delta
_{j}\right\vert +\delta _{0},\text{ \ }v^{\ast }\left( n\right)
=\sum_{j=0}^{\infty }\delta _{j}^{\ast },
\end{equation*}

Next, we introduce on $G_{m}$ an orthonormal system which is called the
Vilenkin system.

At first define the complex valued function $r_{k}\left( x\right)
:G_{m}\rightarrow \mathbb{C},$ the generalized Rademacher functions as 
\begin{equation*}
r_{k}\left( x\right):=\exp \left( 2\pi\imath x_{k}/m_{k}\right) \text{
\qquad }\left( \imath^{2}=-1,\text{ }x\in G_{m},\text{ }k\in \mathbb{N}%
\right).
\end{equation*}

Now define the Vilenkin system $\psi:=(\psi _{n}:n\in \mathbb{N})$ on $G_{m} 
$ as: 
\begin{equation*}
\psi _{n}\left( x\right):=\prod_{k=0}^{\infty }r_{k}^{n_{k}}\left( x\right) 
\text{ \qquad }\left( n\in \mathbb{N}\right).
\end{equation*}

Specially, we call this system the Walsh-Paley one if $m\equiv 2.$

The norm (or quasi norm) of the space $L_{p}(G_{m})$ is defined by \qquad
\qquad \thinspace\ 
\begin{equation*}
\left\Vert f\right\Vert _{p}:=\left( \int_{G_{m}}\left\vert f(x)\right\vert
^{p}d\mu (x)\right) ^{1/p}\qquad \left( 0<p<\infty \right) .
\end{equation*}

The Vilenkin system is orthonormal and complete in $L_{2}\left( G_{m}\right)
\,$ (for details see e.g. \cite{AVD,Vi}).

If $\ f\in L_{1}\left( G_{m}\right) $ we can establish Fourier coefficients,
partial sums of the Fourier series, Fejér means, Dirichlet kernels
with respect to the Vilenkin system in the usual manner: 
\begin{eqnarray*}
\widehat{f}(k) &:&=\int_{G_{m}}f\overline{\psi }_{k}d\mu \text{\thinspace
\qquad\ \ \ \ }\left( \text{ }k\in \mathbb{N}\text{ }\right) \\
S_{n}f &:&=\sum_{k=0}^{n-1}\widehat{f}\left( k\right) \psi _{k}\ \text{%
\qquad\ \ }\left( \text{ }n\in \mathbb{N}_{+},\text{ }S_{0}f:=0\right) \\
\sigma _{n}f &:&=\frac{1}{n}\sum_{k=0}^{n-1}S_{k}f\text{ \qquad\ \ \ \ \ }%
\left( \text{ }n\in \mathbb{N}_{+}\text{ }\right) \\
D_{n} &:&=\sum_{k=0}^{n-1}\psi _{k\text{ }}\text{ \qquad\ \ \qquad }\left( 
\text{ }n\in \mathbb{N}_{+}\text{ }\right).
\end{eqnarray*}

Recall that 
\begin{equation}  \label{3}
\quad \hspace*{0in}D_{M_{n}}\left( x\right) =\left\{ 
\begin{array}{l}
\text{ }M_{n}\text{ \ \ \ }x\in I_{n} \\ 
\text{ }0\text{ \qquad }x\notin I_{n}%
\end{array}
\right.
\end{equation}
and 
\begin{equation}  \label{9dn}
D_{s_{n}M_{n}}=D_{M_{n}}\sum_{k=0}^{s_{n}-1}\psi
_{kM_{n}}=D_{M_{n}}\sum_{k=0}^{s_{n}-1}r_{n}^{k} \text{ \qquad } 1\leq
s_{n}\leq m_{n}-1.
\end{equation}

The $n$-th Lebesgue constant is defined in the following way 
\begin{equation*}
L_{n}=\left\Vert D_{n}\right\Vert _{1}.
\end{equation*}

If $f\in L_{1}(G_{m}),$ the maximal functions are also be given by 
\begin{equation*}
f^{*}\left( x\right) =\sup_{n\in \mathbb{N}}\frac{1}{\left| I_{n}\left(
x\right) \right| }\left| \int_{I_{n}\left( x\right) }f\left( u\right) \mu
\left( u\right) \right|
\end{equation*}

Hardy martingale space $H_{1}\left( G_{m}\right)$
consist of all martingales for which (for details see e.g. \cite{We1,We3})
\begin{equation*}
\left\| f\right\| _{H_{1}}:=\left\| f^{*}\right\| _{1}<\infty.
\end{equation*}

\section{Auxiliary results}

\begin{lemma}
\label{lemma2} \cite{luko} Let $n\in \mathbb{N}$. Then 
\begin{equation*}
\frac{1}{4\lambda}v\left( n\right) +\frac{1}{\lambda}v^{\ast }\left(
n\right) +\frac{1}{2\lambda}\leq L_{n}\leq \frac{3}{2}v\left( n\right)
+4v^{\ast }\left( n\right) -1,
\end{equation*}
where $\lambda=\sup_{n\in \mathbb{N}}m_{n}. $
\end{lemma}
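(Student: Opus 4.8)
The plan is to reduce everything to an explicit pointwise description of the Dirichlet kernel along the digits of $n$. Writing $n=\sum_{j=0}^{|n|}n_jM_j$ and partitioning the indices $k\in\{0,\dots,n-1\}$ according to the largest coordinate at which $k$ branches away from $n$, one is led to the decomposition
\[
D_n=\sum_{j=0}^{|n|}\Big(\prod_{i>j}r_i^{n_i}\Big)\Big(\sum_{a=0}^{n_j-1}r_j^{a}\Big)D_{M_j},
\]
where the inner sum is empty when $n_j=0$. By \eqref{3} each $D_{M_j}$ is supported on $I_j$, and by \eqref{9dn} the $j$-th summand is (a unimodular modulation of) the elementary kernel $D_{n_jM_j}=D_{M_j}\sum_{a=0}^{n_j-1}r_j^{a}$. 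First I would integrate $\left\vert D_n\right\vert$ over the shells $I_p\setminus I_{p+1}$, which partition $G_m$ up to a null set. On such a shell one has $x_0=\dots=x_{p-1}=0$, so every level $l<p$ contributes its constant value $n_l$ while the level $l=p$ carries the only genuine oscillation through $x_p$; the higher modulations $\prod_{i>p}r_i^{n_i}$ are unimodular and drop out in absolute value.

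This should collapse the problem to a single closed expression. Setting $N_p:=\sum_{l<p}n_lM_l$ and $\beta_p:=N_p/M_p\in[0,1)$, I expect the shell computation to give, on $I_p\setminus I_{p+1}$, the identity $\left\vert D_n(x)\right\vert=\big\vert M_p\sum_{a=0}^{n_p-1}r_p^{a}(x)+N_p\,r_p^{n_p}(x)\big\vert$, and hence
\[
L_n=\sum_{p=0}^{|n|}\frac{1}{m_p}\sum_{t=1}^{m_p-1}\Big\vert\frac{\omega_t^{n_p}-1}{\omega_t-1}+\beta_p\,\omega_t^{n_p}\Big\vert+\frac{n}{M_{|n|+1}},\qquad \omega_t=\exp(2\pi\imath t/m_p).
\]
The final term is the contribution of $I_{|n|+1}$, on which $D_n\equiv n$. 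The next step is to analyse the per-level quantity $T_p$ (the inner average). When $\beta_p=0$ it is exactly the one-dimensional Lebesgue constant of the digit $n_p$ on $Z_{m_p}$, for which the elementary bound $\vert\sum_{a=0}^{n_p-1}r_j^a(t)\vert=\vert\sin(\pi n_pt/m_p)\vert/\vert\sin(\pi t/m_p)\vert$ yields two-sided estimates comparable to $1+\delta_p^{\ast}$; since $m_p\le\lambda$ these are $O(1)$, which is exactly why the factor $\lambda$ must appear in the lower bound. Summing the baseline contributions of the nonzero digits produces the variation count, and the deficiency parts $m_p-n_p-1=\delta_p^{\ast}/\delta_p$ produce the $v^{\ast}(n)$ term.

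The main obstacle is the coupling term $\beta_p\omega_t^{n_p}$, which cannot be discarded. A telescoping check on $n=M_{|n|+1}-1$ (all digits maximal, $\beta_p=1-1/M_p$) shows $T_p=(m_p-1)/M_{p+1}$, so $\sum_pT_p$ telescopes to a bounded quantity even though there are $|n|+1$ active levels; thus a block of consecutive nonzero digits contributes only a bounded amount, which is precisely why $v(n)=\sum_j\vert\delta_{j+1}-\delta_j\vert+\delta_0$ (the number of block boundaries) rather than the number of nonzero digits governs the size of $L_n$. The hard part of the proof will therefore be to control this interference from both sides simultaneously: I would use the recursion $\beta_{p+1}=(\beta_p+n_p)/m_p$ to perform a summation by parts across each maximal block of nonzero digits, showing that the interior terms cancel while the two block endpoints survive. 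The upper bound $\tfrac32 v(n)+4v^{\ast}(n)-1$ follows by estimating the surviving endpoint and deficiency terms with the explicit one-level constants, and the lower bound $\tfrac1{4\lambda}v(n)+\tfrac1{\lambda}v^{\ast}(n)+\tfrac1{2\lambda}$ requires the reverse estimate that the cancellation is never total—i.e. that at each block boundary $\vert\tfrac{\omega_t^{n_p}-1}{\omega_t-1}+\beta_p\omega_t^{n_p}\vert$ stays bounded below on a fixed proportion of the values $t$, which is where the group structure and the precise trigonometric lower bounds enter.
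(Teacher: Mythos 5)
First, a point of comparison: the paper itself contains no proof of this lemma --- it is quoted from Lukomskii \cite{luko} --- so your attempt can only be measured against that source, whose argument indeed begins exactly where yours does. Your reduction is correct and verifiable: the partition of $\{0,\dots,n-1\}$ according to the highest digit at which $k$ branches off $n$ gives the stated kernel decomposition, the shell integration over $I_p\setminus I_{p+1}$ is valid (the modulations $\prod_{i>p}r_i^{n_i}$ are unimodular, the Rademacher factors at levels below $p$ equal $1$ there, and $D_{M_j}$ vanishes for $j>p$ by \eqref{3}), and the resulting closed formula
\[
L_n=\sum_{p=0}^{|n|}\frac{1}{m_p}\sum_{t=1}^{m_p-1}\Big\vert \frac{\omega_t^{n_p}-1}{\omega_t-1}+\beta_p\,\omega_t^{n_p}\Big\vert+\frac{n}{M_{|n|+1}}
\]
checks out, as does your sanity computation $T_p=(m_p-1)/M_{p+1}$ for $n=M_{|n|+1}-1$ (which gives $L_n=2-2/M_{|n|+1}$, consistent with both stated bounds). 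This identity is in substance the core identity underlying Lukomskii's proof.

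From that point on, however, the proposal is a statement of intent rather than a proof, and what is missing is the entire quantitative content of the lemma: the constants $\tfrac{3}{2}$, $4$, $-1$ and $\tfrac{1}{4\lambda}$, $\tfrac{1}{\lambda}$, $\tfrac{1}{2\lambda}$ are never derived, because no actual two-sided estimate of your per-level quantity $T_p$ in terms of $n_p$, $m_p$ and $\beta_p$ is proved. Worse, the one mechanism you do propose --- summation by parts across a block of nonzero digits, with ``interior terms cancelling while the endpoints survive'' --- does not match what your own test case exhibits: for a block of maximal digits the interior terms are not cancelled against one another; each $T_p=(m_p-1)/M_{p+1}$ is \emph{individually} small because $\beta_p$ is close to $1$ and the geometric sum $\frac{\omega_t^{n_p}-1}{\omega_t-1}$ nearly annihilates the term $\beta_p\omega_t^{n_p}$. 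So the needed ingredient is a pointwise two-sided bound on $\big\vert\frac{\omega_t^{n_p}-1}{\omega_t-1}+\beta_p\omega_t^{n_p}\big\vert$ sensitive to the position of $p$ inside a block, not a telescoping cancellation between levels. Also unaddressed are the runs of zero digits, where $T_p=\frac{m_p-1}{m_p}\beta_p$ with $\beta_{p+1}=\beta_p/m_p$: for the upper bound these must be summed geometrically (this works since $m_p\ge 2$, but it must be said), and for the lower bound one needs $\beta_p\ge 1/\lambda$ at the first zero following a block --- this is where the term $\frac{1}{4\lambda}v(n)$ actually originates; likewise the estimate $T_p\gtrsim\delta_p^{\ast}/\lambda$ for mid-range digits, which you reduce to the unproven assertion that the integrand ``stays bounded below on a fixed proportion of the values $t$''. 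Until these trigonometric estimates are carried out with the stated constants, what you have is a correct reduction plus an accurate inventory of the difficulties, not a proof of the lemma.
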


\begin{lemma}
\label{lemma1} \cite{smt} Let $n\in \mathbb{N}$. Then there exists an
ansolute constant $c, $ such that 
\begin{equation*}
\frac{1}{nM_{n}}\underset{k=1}{\overset{M_{n}-1}{\sum }}v\left( k\right)
\geq c>0.
\end{equation*}
\end{lemma}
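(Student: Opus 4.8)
The plan is to regard $v(k)$ as the total variation of the binary digit-indicator sequence attached to $k$, and to compute $\sum_{k=1}^{M_n-1}v(k)$ by exchanging the order of summation, just precisely enough to extract a factor $n$. Recall that for $k=\sum_{j}k_jM_j$ one has $\delta_j=\delta_j(k)=\operatorname{sign}k_j\in\{0,1\}$, so the sequence $\delta_0(k),\delta_1(k),\dots$ records exactly which digits of $k$ are nonzero, and by definition $v(k)=\sum_{j\ge0}\bigl|\delta_{j+1}(k)-\delta_j(k)\bigr|+\delta_0(k)$. Since $v(0)=0$ I may start the sum from $k=0$, and since $\delta_0(k)\ge0$ I may discard that last term and retain only the lower bound $v(k)\ge\sum_{j\ge0}|\delta_{j+1}(k)-\delta_j(k)|$.

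The key step is the exchange of sums. For $0\le k<M_n$ every digit of index $\ge n$ vanishes, so $\delta_j(k)=0$ for $j\ge n$ and only the indices $j=0,1,\dots,n-1$ can contribute, giving
\[
\sum_{k=0}^{M_n-1}v(k)\ \ge\ \sum_{j=0}^{n-1}\ \sum_{k=0}^{M_n-1}\bigl|\delta_{j+1}(k)-\delta_j(k)\bigr|.
\]
As $k$ runs through $\{0,\dots,M_n-1\}$ its digits $(k_0,\dots,k_{n-1})$ run bijectively through $Z_{m_0}\times\cdots\times Z_{m_{n-1}}$, so the inner sum equals $M_n$ times the product-measure probability $P_j$ that the $j$-th and $(j+1)$-th digit-indicators differ. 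For $j\le n-2$ this is
\[
P_j=\frac{1}{m_j}\cdot\frac{m_{j+1}-1}{m_{j+1}}+\frac{m_j-1}{m_j}\cdot\frac{1}{m_{j+1}},
\]
while for $j=n-1$ one simply has $P_{n-1}=(m_{n-1}-1)/m_{n-1}$, since $\delta_n(k)=0$ identically.

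Here is where boundedness enters, which I expect to be the only delicate point. Because $2\le m_j\le\lambda$ for every $j$, each $P_j$ admits a lower bound independent of $j$ and $n$: the first summand alone gives $P_j\ge\frac{1}{m_j}\cdot\frac{m_{j+1}-1}{m_{j+1}}\ge\frac{1}{\lambda}\cdot\frac12=\frac{1}{2\lambda}$, and likewise $P_{n-1}\ge\frac12\ge\frac{1}{2\lambda}$. Hence each of the $n$ inner sums is at least $M_n/(2\lambda)$, so that
\[
\sum_{k=1}^{M_n-1}v(k)\ \ge\ \frac{nM_n}{2\lambda},
\]
and dividing by $nM_n$ yields the assertion with $c=1/(2\lambda)>0$. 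The point to guard against is precisely the unbounded case: if the $m_j$ were permitted to grow, the factor $1/m_j$ would drive the transition probability to zero and the linear-in-$n$ gain would collapse, which is exactly why the hypothesis restricts attention to bounded Vilenkin groups.
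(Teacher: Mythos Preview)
The paper does not actually supply a proof of this lemma: it is quoted as an auxiliary result with the citation \cite{smt} and no argument is given. So there is no in-paper proof to compare against.

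Your argument is correct. The exchange of summation is justified because for $0\le k<M_n$ only finitely many $\delta_j(k)$ are nonzero; the identification of the inner sum with $M_n P_j$ uses exactly the bijection between $\{0,\dots,M_n-1\}$ and the product $Z_{m_0}\times\cdots\times Z_{m_{n-1}}$ via the digit expansion; and the lower bounds $P_j\ge \frac{1}{m_j}\cdot\frac{m_{j+1}-1}{m_{j+1}}\ge\frac{1}{2\lambda}$ (for $j\le n-2$) and $P_{n-1}=\frac{m_{n-1}-1}{m_{n-1}}\ge\frac12$ are valid since $2\le m_j\le\lambda$. This yields $\sum_{k=1}^{M_n-1}v(k)\ge nM_n/(2\lambda)$ and hence the lemma with $c=1/(2\lambda)$, a constant depending only on the fixed bound $\lambda=\sup_n m_n$, consistent with the paper's standing assumption of a bounded Vilenkin group. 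Your closing remark that the bound degenerates in the unbounded case is also accurate.
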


\section{Main Result}

\begin{theorem}
\label{theorem2}There exists a martingale $f\in H_{1},$ such that 
\begin{equation*}
\sup_{n\in \mathbb{N}}\frac{1}{n}\overset{n}{\underset{k=1}{\sum }}%
\left\Vert S_{k}f\right\Vert _{1}=\infty .
\end{equation*}
\end{theorem}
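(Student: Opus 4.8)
The plan is to exhibit an explicit counterexample by constructing a martingale $f$ as a lacunary-type sum of suitably normalized Dirichlet-kernel building blocks, and then to show that the averaged partial-sum norms blow up along a subsequence. The natural candidate is to take $f=\sum_{j}\lambda_j a_j$, where each $a_j$ is built from a Dirichlet kernel $D_{M_{n_j}}$ (or a modulated version $\psi_{M_{n_j}-1}D_{M_{n_j}}$) supported essentially on an interval $I_{n_j}$, normalized so that it behaves like an atom in $H_1$, with $\{n_j\}$ a rapidly increasing sequence and $\{\lambda_j\}$ chosen so that $\sum_j|\lambda_j|<\infty$ guarantees $f\in H_1$ via the atomic/maximal-function characterization. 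The point is that such blocks are cheap in $H_1$-norm but expensive for the partial-sum averages, because the latter see the \emph{Lebesgue constants} $L_k=\|D_k\|_1$, which by Lemma \ref{lemma2} grow like $v(k)+v^\ast(k)$.

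First I would set up the martingale and verify $\|f\|_{H_1}<\infty$ using the equivalence \eqref{equi}, i.e. by controlling $\|\sup_n|S_{M_n}f|\|_1$; the lacunary separation of the frequencies $M_{n_j}$ makes the $S_{M_n}f$ essentially isolate the individual blocks, so the supremum is dominated by a summable series. Second, I would compute the partial sums $S_k a_j$ of a single block. For a block supported near $I_{n_j}$ the partial sums $S_k$ for $k$ in the relevant range reproduce shifted Dirichlet kernels, so that $\|S_k a_j\|_1$ is comparable to $L_k/M_{n_j}$ (the $M_{n_j}$ coming from the normalization). Third, and this is the crux, I would estimate the average $\frac{1}{n}\sum_{k=1}^{n}\|S_k f\|_1$ at the special values $n=M_{n_j}$. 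Choosing the index so that the relevant partial sums fall in the range $1\le k\le M_{n_j}-1$, the average is bounded below by a constant times
\begin{equation*}
\frac{1}{M_{n_j}}\sum_{k=1}^{M_{n_j}-1}\frac{L_k}{M_{n_j}}\cdot M_{n_j}
\gtrsim \frac{1}{M_{n_j}}\sum_{k=1}^{M_{n_j}-1}v(k),
\end{equation*}
and here Lemma \ref{lemma1} supplies exactly the lower bound $\frac{1}{nM_{n}}\sum_{k=1}^{M_{n}-1}v(k)\ge c>0$, forcing the $n$-th average to grow like $M_{n_j}$ up to the $\lambda_j$-weight.

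The main obstacle I anticipate is the careful bookkeeping in the third step: one must choose the weights $\lambda_j$ and the gaps $n_{j+1}-n_j$ so that two competing requirements hold simultaneously — small enough that $\sum|\lambda_j|<\infty$ (hence $f\in H_1$), yet large enough that the single dominant block at level $n_j$ survives the averaging and produces $\sup_n \frac1n\sum_{k\le n}\|S_kf\|_1=\infty$. Concretely one arranges that the contribution of block $j$ to the average at $n=M_{n_j}$ is of order $\lambda_j v^\ast$-type growth, say $\sim \lambda_j \log M_{n_j}$ or $\sim\lambda_j M_{n_j}$ depending on the normalization, and then picks $\lambda_j$ decaying just slowly enough (e.g. $\lambda_j\sim 1/j^2$ with super-exponentially growing $M_{n_j}$) that this tends to infinity while $\sum\lambda_j$ converges. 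A secondary technical point is handling the cross terms $S_k a_i$ for $i\ne j$ in the average; the lacunarity must be strong enough that these contribute a negligible or controlled amount, so that the lower bound from the single dominant block is not cancelled. Once the weights are fixed to balance the estimate coming from Lemma \ref{lemma1} against the $H_1$-bound, the divergence of the supremum follows by evaluating along the sequence $n=M_{n_j}$.
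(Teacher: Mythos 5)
Your proposal is essentially the paper's own proof: the paper takes $f=\sum_{k}\alpha_k^{-1/2}\bigl(D_{M_{\alpha_k+1}}-D_{M_{\alpha_k}}\bigr)$ with $\sum_k \alpha_k^{-1/2}<\infty$, verifies $f\in H_1$ through the maximal characterization \eqref{equi} exactly as you outline, writes $S_j f$ for $j$ in the block $[M_{\alpha_k},2M_{\alpha_k}]$ as $S_{M_{\alpha_k}}f$ plus the modulated Dirichlet kernel $\alpha_k^{-1/2}\psi_{M_{\alpha_k}}D_{j-M_{\alpha_k}}$, and obtains divergence along $n=M_{\alpha_k+1}$ by precisely your combination of Lemma \ref{lemma2} and Lemma \ref{lemma1}. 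Two small points of bookkeeping: the growth supplied by Lemma \ref{lemma1} is of order $\lambda_j n_j\sim\lambda_j\log M_{n_j}$, not $\lambda_j M_{n_j}$ (your hedged alternative is the correct one, and your choice $\lambda_j\sim 1/j^2$ with $n_j$ growing fast enough, mirroring the paper's $\alpha_k^{-1/2}\cdot\alpha_k=\alpha_k^{1/2}\to\infty$, does work), and the cross terms you worry about need no delicate lacunarity at all --- all earlier blocks are absorbed into $S_{M_{\alpha_k}}f$, which is uniformly controlled by the bound \eqref{1ccss}, while later blocks have frequencies too high to appear.
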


\section{Proof of the Theorem}

\begin{proof}[Proof of Theorem \protect\ref{theorem2}]
Let  $\left\{ \alpha _{k}:k\in \mathbb{N}%
\right\} $ be an increasing sequence of the positive integers such that 
\begin{equation} \label{2a}
\sum_{k=0}^{\infty }\frac{1}{\alpha _{k}^{1/2}}<c<\infty .  
\end{equation}

Let \qquad 
\begin{equation*}
f=\sum_{k=1}^{\infty}\frac{a_{k}}{\alpha _{k}^{1/2}},
\end{equation*}%
where 
\begin{equation*} a_{k}=D_{M_{\alpha_{k}+1}}-D_{M_{_{\alpha _{k}}}}.
\end{equation*}

It is evident that
\begin{equation*}
S_{M_n}f=\sum_{\left\{ k;\text{ }\alpha _{k}<n\right\} }\frac{a_{k}}{\alpha _{k}^{1/2}},
\end{equation*}
and
\begin{equation*}
\left\vert S_{M_n}f\right\vert\leq\sum_{\left\{ k;\text{ }\alpha _{k}<n\right\}}\frac{\left \vert a_{k}\right\vert}{\alpha _{k}^{1/2}}\leq\sum_{k=1}^{\infty}\frac{\left \vert a_{k}\right\vert}{\alpha _{k}^{1/2}},
\end{equation*}

It follows that

\begin{equation*}
\sup_{n\in \mathbb{N}}\left\vert S_{M_n}f\right\vert\leq \sum_{k=1}^{\infty}\frac{\left \vert a_{k}\right\vert}{\alpha _{k}^{1/2}}.
\end{equation*}

Since (see equality (\ref{3}))
\begin{equation*}
\left\Vert a_{k}\right\Vert\leq 2, \ \ \text{fol all} \ \ k\in \mathbb{N}.
\end{equation*}
by combining (\ref{equi}) and (\ref{2a}) we get that
\begin{eqnarray*}
&&\left\Vert f\right\Vert_{H_{1}}\leq c \left\Vert \sup_{k\in \mathbb{N}}\left\vert S_{M_k}f\right\vert\right\Vert_{1} \\
&\leq & c \left\Vert \sum_{k=1}^{\infty}\frac{\left \vert a_{k}\right\vert}{\alpha _{k}^{1/2}} \right\Vert
\leq c \sum_{k=1}^{\infty}\frac{\left\Vert a_{k} \right\Vert}{\alpha _{k}^{1/2}}\leq 2c \sum_{k=1}^{\infty}\frac{1}{\alpha _{k}^{1/2}}\leq c<\infty.
\end{eqnarray*}

Moreover,
\begin{equation}  \label{6}
\widehat{f}(j)=\left\{ 
\begin{array}{l}
\frac{1}{\alpha _{k}^{1/2}},\,\,\text{\ \ \ \
\thinspace \thinspace }j\in \left\{M_{\alpha _{k}},...,M_{\alpha
_{k}+1}-1\right\}, \text{ }k\in \mathbb{N} \\ 
0\text{ },\text{ \thinspace \qquad \thinspace \thinspace \thinspace
\thinspace \thinspace }j\notin \bigcup\limits_{k=1}^{\infty }\left\{
M_{\alpha _{k}},...,M_{\alpha _{k}+1}-1\right\} .\text{ }%
\end{array}
\right.
\end{equation}

Let $M_{\alpha _{k}}\leq j<M_{\alpha _{k+1}}.$ Since 
\begin{equation*}
D_{j+M_{\alpha _{k}}}=D_{M_{\alpha _{k}}}+\psi _{_{M_{\alpha _{k}}}}D_{j},
\text{ \qquad when \thinspace \thinspace }j<M_{\alpha_{k}},
\end{equation*}%
if we apply (\ref{6}) we obtain that 
\begin{eqnarray}  \label{8}
S_{j}f &=&S_{M_{\alpha _{k}}}f+\sum_{v=M_{\alpha _{k}}}^{j-1}\widehat{f}%
(v)\psi _{v}  \\
&=&S_{M_{\alpha _{k}}}f+\sum_{v=M_{\alpha _{k}}}^{j-1}\widehat{f}(v)\psi _{v}
\notag \\
&=&S_{M_{\alpha _{k}}}f+\frac{M_{\alpha _{k}}^{1/p-1}}{\alpha _{k}^{1/2}}%
\sum_{v=M_{\alpha _{k}}}^{j-1}\psi _{v}   \\
&=&S_{M_{\alpha _{k}}}f+\frac{M_{\alpha _{k}}^{1/p-1}}{\alpha _{k}^{1/2}}%
\left( D_{j}-D_{M_{\alpha _{k}}}\right)   \\
&=&S_{M_{\alpha _{k}}}f+\frac{M_{\alpha _{k}}^{1/p-1}}{\alpha _{k}^{1/2}}%
\psi _{M_{\alpha _{k}}}D_{j-M_{\alpha _{k}}} \\ 
&=&III_{1}+III_{2}.
\end{eqnarray}

In view of (\ref{1ccss}) we can write that 
\begin{eqnarray} \label{8bbb2}
\left\Vert III_{1}\right\Vert_{1}
\leq\left\Vert S_{M_{\alpha_{k}}}f\right\Vert _{1}  \leq c\left\Vert f\right\Vert_{H_{1}}. 
\end{eqnarray}

By combining Lemma \ref{lemma2} and (\ref{8bbb2}) we get that%
\begin{eqnarray*}
\left\Vert S_{n}f\right\Vert _{1} 
\geq \left\Vert III_{2}\right\Vert _{1}-\left\Vert III_{1}\right\Vert _{1}
\geq\frac{cv\left( {n-M_{\alpha _{k}}}\right)}{\alpha _{k}^{1/2}}-c\left\Vert f\right\Vert _{H_{1}}.
\end{eqnarray*}

Hence, according to Lemma \ref{lemma1} we can conclude that 
\begin{eqnarray*}
&&\underset{n\in \mathbf{\mathbb{N}}_{+}}{\sup }\frac{1}{n}\underset{k=1}{%
\overset{n}{\sum }}\left\Vert S_{k}f\right\Vert _{1} \\
&\geq &\frac{1}{M_{\alpha _{k}+1}}\underset{\left\{ M_{\alpha _{k}}\leq
l\leq 2M_{\alpha _{k}}\right\} }{\sum }\left\Vert S_{l}f\right\Vert _{1} \\
&\geq &\frac{1}{M_{\alpha _{k}+1}}\underset{\left\{ M_{\alpha _{k}}\leq
l\leq 2M_{\alpha _{k}}\right\} }{\sum }\left( \frac{v\left( l-M_{\alpha
_{k}}\right) }{\alpha _{k}^{1/2}}-c\left\Vert f\right\Vert _{H_{1}}\right) \\
&\geq &\frac{c}{\alpha _{k}^{1/2}M_{\alpha _{k}}}\underset{l=1}{\overset{%
M_{\alpha _{k}}-1}{\sum }}v\left( l\right) -c\left\Vert f\right\Vert
_{H_{1/2}}^{1/2} \\
&\geq &c\alpha _{k}^{1/2}\rightarrow \infty ,\text{ as \ }k\rightarrow
\infty .
\end{eqnarray*}

The proof is complete.
\end{proof}

\qquad

\end{document}